\documentclass[11pt,reqno]{amsart}
\usepackage{amsmath,mathrsfs}

\usepackage{enumerate}
\usepackage[english]{babel}
\usepackage{hyperref}
\usepackage[all,cmtip]{xy}
\entrymodifiers={+!!<0pt,\fontdimen22\textfont2>}

\usepackage[a4paper,text={15.5cm,25.2cm},centering]{geometry}
\setlength{\parskip}{1.2ex}
\clubpenalty = 10000
\widowpenalty = 10000

\theoremstyle{theorem}
\newtheorem{thm}{Theorem}[section]
\newtheorem{lem}[thm]{Lemma}
\newtheorem{prop}[thm]{Proposition}
\newtheorem{cor}[thm]{Corollary}

\makeatletter
\newtheorem*{rep@theorem}{\rep@title}
\newcommand{\newreptheorem}[2]{%
\newenvironment{rep#1}[1]{%
 \def\rep@title{#2 \ref{##1}}%
 \begin{rep@theorem}}%
 {\end{rep@theorem}}}
\makeatother

\newreptheorem{thm}{Theorem}

\theoremstyle{definition}
\newtheorem{defn}[thm]{Definition}
\newtheorem{ex}[thm]{Example}

\newtheorem{ques}[thm]{Question}
\theoremstyle{remark}
\newtheorem{rem}[thm]{Remark}
\newtheorem*{ack}{Acknowledgement}


\def\Z{{\mathbb Z}} 

\def\B{{\mathcal L}} 
\def\K{{\mathcal K}} 

\mathchardef\ordinarycolon\mathcode`\: 
\def\vcentcolon{\mathrel{\mathop\ordinarycolon}} 
\providecommand*\coloneqq{\mathrel{\vcentcolon\mkern-1.2mu}=}

\def\cast{$C^{*}$}
\DeclareMathOperator\id{id} 

\def\B{\mathcal B}
\def\Cl{C^{*}_{\lambda}}
\def\Cu{C^{*}_{u}}

\def\G{{\Gamma}}
\def\L{L}

\def\Ad{\mathrm{Ad}}

\begin{document}

\title{On the invariant uniform Roe algebra}
\author{Takeshi Katsura and Otgonbayar Uuye}
\date{\today}   

\address{
Takeshi Katsura\\
Department of Mathematics\\ 
Faculty of Science and Technology\\ 
Keio University\\ 
3-14-1 Hiyoshi, Kouhoku-ku\\ 
Yokohama 223-8522\\
JAPAN}
\address{
Otgonbayar Uuye\\
School of Mathematics\\
Cardiff University\\
Senghennydd Road\\
Cardiff, Wales, UK\\
CF24 4AG}

\begin{abstract} Let $\G$ be a countable discrete group. We show that $\G$ has the approximation property if and only if $\G$ is exact and for any operator space $S \subseteq \K(H)$ we have
	\begin{equation*}
	\Cu(\G)^{\G} \otimes S = (\Cu(\G) \otimes S)^{\G},
	\end{equation*}
where $\Cu(\G)$ is the uniform Roe algebra with the right adjoint $\G$-action. This answers a question of J.\ Zacharias. We also show that characterisations of several properties of $\G$ in terms of the reduced group \cast-algebra $\Cl(\G)$ apply to the invariant uniform Roe algebra $\Cu(\G)^{\G}$. 
\end{abstract}

\maketitle
\section{Introduction}

Let $\G$ be a countable discrete group and let $l^{2}\G$ denote the Hilbert space of square-summable functions on $\G$ with orthonormal basis $\left\{\delta_{s} \mid s \in \G\right\}$. We identify operators $T \in \B(l^{2}\G)$ with the $(\G \times \G)$-matrix  given by $[\langle T\delta_{s}, \delta_{t}\rangle]_{s, t\in \G}$.

Let $\lambda$ and $\rho$ denote the left and right regular representations of $\G$ on $l^{2}\G$ given by 
	\begin{equation*}
	\lambda(g)\delta_{s} \coloneqq \delta_{gs}\quad \text{and}\quad \rho(g)\delta_{s} \coloneqq \delta_{sg^{-1}},\quad g, s \in \G,
	\end{equation*}
respectively. The {\em group \cast-algebra} $\Cl(\G)$ of $\G$ is the \cast-algebra generated by $\lambda(\G) \subseteq \B(l^{2}\G)$.

In this paper, we consider two \cast-subalgebras of $\B(l^{2}\G)$ containing $\Cl(\G)$. The first is the {\em uniform Roe algebra}  $\Cu(\G)$. This algebra may be thought of as the closure of scalar $(\G \times \G)$-matrices $[\alpha_{s, t}]$ of finite width (i.e. $\{st^{-1} \in \G \mid \alpha_{s,t} \neq 0\}$ is finite) with uniformly bounded entries acting on $l^{2}\Gamma$. The second is the {\em group von Neumann algebra} $\L(\G) \coloneqq \Cl(\G)''$.  By a fundamental result of Murray and von Neumann (c.f.\ \cite[Secion 6.1]{MR2391387}), we have $\L(\G) = \rho(\G)'$, the commutant of $\rho(\G)$. It follows that $\L(\G)$ can be identified with the algebra of elements in $\B(\l^{2}\G)$ that are constant down the diagonals i.e.\ satisfying $\alpha_{s,t} = \alpha_{st^{-1}, e}$ for $s$, $t \in \G$. 

The adjoint action $\Ad(\rho)$ of $\G$ on $\B(l^{2}\G)$ preserves $\Cu(\G)$ and the invariant elements are precisely the intersection:
\begin{equation*}
\Cu(\G)^{\G} = \Cu(\G) \cap \L(\G).
\end{equation*}

Clearly, we have
	\begin{equation*}
	\Cl(\G) \subseteq \Cu(\G)^{\G}.
	\end{equation*}
\begin{defn}[{cf.\ \cite[11.5.3]{MR2007488}}]
We say that $\G$ has the {\em invariant translation approximation property} (ITAP) if 
	\begin{equation*}
	\Cl(\G) = \Cu(\G)^{\G}.
	\end{equation*}
\end{defn}

Conjecturally, all countable discrete groups have the ITAP. It is proved in \cite{MR2007488} that {\em amenable} groups and {\em finitely generated free} groups have the ITAP. 

Haagerup and Kraus defined and studied the {\em approximation property} (AP) for a group, which we recall below briefly for the convenience of the reader. See \cite{MR1220905, MR2391387} for more details.

Let $A(\G) = A_{2}(\G)$ denote the Fourier algebra of $\G$ and let $B_{2}(\G) = M_{0}A(\G)$ denote the algebra of completely bounded multipliers of $A(\G)$. Let $Q(\G)$ denote the completion of $l^{1}\G$ in the norm
	\begin{equation*}
	||f||_{Q} \coloneqq \sup \left\{\left|\sum f(s)u(s)\right| \mid u \in B_{2}(\G), ||u||_{B_{2}} \le 1\right\}.
	\end{equation*}

Then $B_{2}(\G) \cong Q(\G)^{*}$.  
\begin{defn}[{Haagerup-Kraus, \cite[Definition 1.1]{MR1220905}}]
We say that $\G$ has the {\em approximation property} (AP) if the constant function $1 \in B_{2}(\G)$ is in the $\sigma(B_{2}(\G), Q(\G))$-closure of $A(\G)$. 
\end{defn}

\begin{ex}\label{ex ap}
\begin{enumerate}[(i)]
\item Weakly amenable, in particular amenable, groups have the AP (cf.\  \cite{MR1220905}). 
\item $SL_{3}(\Z)$ does not have the AP (cf.\ \cite{MR2838352}). More generally, lattices in  connected simple Lie groups with finite center and real rank $\ge 2$ do not have the AP (cf.\ \cite{Haagerup:2012fk}). 
\end{enumerate}
\end{ex}

In \cite{MR2215118}, Zacharias proved that groups with the AP have the ITAP. In fact, he proved the following stronger statement. We fix an infinite dimensional separable Hilbert space $H$. The \cast-algebra of all bounded (respectively, compact) linear operators on $H$ is denoted by $\B(H)$ (respectively, $\K(H)$).

Let $S \subseteq \B(H)$ be a (concrete) operator space. 
Clearly, we have inclusions
	\begin{equation*}
	\Cl(\G) \otimes S \subseteq \Cu(\G)^{\G} \otimes S \subseteq (\Cu(\G) \otimes S)^{\G},
	\end{equation*}
where $\G$ is acting trivially on $S$. 

Recall that a countable discrete group $\G$ said to be {\em exact} if $\Cl(\G)$ is exact (cf.\ \cite{MR1721796,MR2391387}).
\begin{thm}[{Zacharias \cite[Theorem 3.2]{MR2215118}}]\label{thm Zac} Let $\G$ be a countable discrete exact group. Then the following are equivalent.
\begin{enumerate}[(i)]
\item  The group $\G$ has the AP.
\item The equality $\Cl(\G) \otimes S = (\Cu(\G) \otimes S)^{\G}$ holds for all operator spaces $S \subseteq \K(H)$.
\item The equality $\Cl(\G) \otimes S = (\Cu(\G) \otimes S)^{\G}$ holds for all operator spaces $S \subseteq \B(H)$.
\end{enumerate}
\end{thm}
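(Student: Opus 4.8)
The plan is to prove the cycle of implications $(i)\Rightarrow(iii)\Rightarrow(ii)\Rightarrow(i)$, the implication $(iii)\Rightarrow(ii)$ being immediate since every operator space $S\subseteq\K(H)$ is in particular an operator space in $\B(H)$. The conceptual starting point is the identification
\begin{equation*}
(\Cu(\G)\otimes S)^{\G}=(\Cu(\G)\otimes S)\cap(\L(\G)\mathbin{\bar\otimes}\B(H)),
\end{equation*}
which holds because the invariants of the action $\Ad(\rho)\otimes\id$ on $\B(l^{2}\G)\mathbin{\bar\otimes}\B(H)$ form exactly the commutant $(\rho(\G)\otimes 1)'=\L(\G)\mathbin{\bar\otimes}\B(H)$. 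Beyond this I would invoke three external inputs: the Haagerup--Kraus theorem that $\G$ has the AP if and only if $\Cl(\G)$ has the operator approximation property (OAP); Kraus's characterisation of the OAP by the slice map property, i.e.\ the coincidence of the minimal tensor product $\Cl(\G)\otimes S$ with the Fubini product (the set of $z$ whose right slices $(\id\otimes\psi)(z)$ lie in $\Cl(\G)$ and whose left slices $(\phi\otimes\id)(z)$ lie in $S$); and the fact that the AP implies the ITAP, so that $\Cl(\G)=\Cu(\G)^{\G}$.

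For $(i)\Rightarrow(iii)$, fix $S\subseteq\B(H)$ and take $x\in(\Cu(\G)\otimes S)^{\G}$, which by the identification lies in both $\Cu(\G)\otimes S$ and $\L(\G)\mathbin{\bar\otimes}\B(H)$. I would then verify that $x$ lies in the Fubini product of $\Cl(\G)$ and $S$: each right slice $(\id\otimes\psi)(x)$ with $\psi$ normal lands in $\Cu(\G)$ (as $x\in\Cu(\G)\otimes\B(H)$) and in $\L(\G)$ (as $x\in\L(\G)\mathbin{\bar\otimes}\B(H)$ and $\psi$ is normal), hence in $\Cu(\G)^{\G}=\Cl(\G)$ by the ITAP; while each left slice $(\phi\otimes\id)(x)$ lands in $S$ since $x\in\Cu(\G)\otimes S$. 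Because the AP supplies the OAP for $\Cl(\G)$, the slice map property collapses this Fubini product onto $\Cl(\G)\otimes S$, giving $x\in\Cl(\G)\otimes S$ and hence the reverse of the standing inclusion.

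The substantive direction is $(ii)\Rightarrow(i)$, and here exactness enters decisively. Assuming the equality in $(ii)$ for all $S\subseteq\K(H)$, the aim is to establish the slice map property for $\Cl(\G)$ and then conclude the OAP, and thence the AP, again via Kraus and Haagerup--Kraus. So take $z$ in the Fubini product of $\Cl(\G)$ and $S$. Its right slices lie in $\Cl(\G)\subseteq\L(\G)$, which forces $z\in\L(\G)\mathbin{\bar\otimes}\B(H)$ (the Fubini product with the injective algebra $\B(H)$ is automatically the von Neumann tensor product). The key point is to place $z$ inside $\Cu(\G)\otimes S$: since $\G$ is exact, the uniform Roe algebra $\Cu(\G)$ (the reduced crossed product of $l^{\infty}(\G)$ by $\G$) is nuclear, so it too enjoys the slice map property, and the conditions that the left slices of $z$ lie in $S$ and its right slices lie in $\Cl(\G)\subseteq\Cu(\G)$ give $z\in\Cu(\G)\otimes S$. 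Thus $z\in(\Cu(\G)\otimes S)\cap(\L(\G)\mathbin{\bar\otimes}\B(H))=(\Cu(\G)\otimes S)^{\G}=\Cl(\G)\otimes S$ by hypothesis $(ii)$, which is exactly the slice map property sought.

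I expect the main obstacle to be this last direction, and specifically the recognition that exactness is precisely what makes the Fubini product of $\Cu(\G)$ collapse: without nuclearity of $\Cu(\G)$ one cannot pass from the slice conditions defining the Fubini product of $\Cl(\G)$ to genuine membership in $\Cu(\G)\otimes S$, which is the bridge that lets hypothesis $(ii)$ apply. A secondary technical matter is bookkeeping the reduction of Kraus's slice map criterion to test spaces $S\subseteq\K(H)$, and verifying that the AP implies the ITAP; I would either cite the latter or reprove it by running a finitely supported net of completely bounded multipliers, furnished by the AP, against a diagonal-constant element of $\Cu(\G)\cap\L(\G)$.
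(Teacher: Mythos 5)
The paper itself offers no proof of this statement: it is quoted verbatim from Zacharias \cite[Theorem 3.2]{MR2215118}, so your attempt can only be measured against the paper's surrounding machinery (Section 2) and the cited literature. Measured that way, your (iii) $\Rightarrow$ (ii) is trivially fine, and your (ii) $\Rightarrow$ (i) is correct and is essentially the same argument the paper uses for its own main theorem, with $\Cl(\G)$ in place of $\Cu(\G)^{\G}$: exactness gives nuclearity of $\Cu(\G)$ (Ozawa), hence the slice map property, which places any $z \in F(\Cl(\G), S, \Cl(\G) \otimes \K(H))$ inside $\Cu(\G) \otimes S$; the commutant identification $(\Cu(\G)\otimes S)^{\G} = (\Cu(\G)\otimes S) \cap (\L(\G)\,\bar{\otimes}\,\B(H))$ shows $z$ is invariant; hypothesis (ii) then yields $z \in \Cl(\G)\otimes S$, so $\Cl(\G)$ has the slice map property for $\K(H)$, hence the OAP by Kraus, hence the AP by Haagerup--Kraus. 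That direction is sound.

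The gap is in (i) $\Rightarrow$ (iii), in two distinct places. First, you take ``AP implies ITAP'' as an external input, but $\Cl(\G) = \Cu(\G)^{\G}$ is exactly the case $S = \C$ of the implication you are proving, and its proof (finitely supported $u_i \to 1$ in $\sigma(B_2(\G), Q(\G))$, the Herz--Schur multipliers $M_{u_i}\otimes\id$, and a convexity argument upgrading weak to norm convergence) is the entire analytic content of the forward direction; deferring it as a ``secondary technical matter'' leaves that direction unproven rather than reduced. Second, and independently of the first: even granting ITAP and the (strong) OAP of $\Cl(\G)$, your collapse of the Fubini product is misapplied. Kraus's theorem gives $F(\Cl(\G), S, \Cl(\G)\otimes \B(H)) = \Cl(\G)\otimes S$, i.e.\ it applies to elements already known to lie in $\Cl(\G)\otimes\B(H)$, whereas your $x$ lies only in $\Cu(\G)\otimes\B(H)$ with right slices in $\Cl(\G)$. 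Passing from there to $x \in \Cl(\G)\otimes\B(H)$ amounts to the assertion that $\Cl(\G)\otimes\B(H)$ is the full kernel of $\Cu(\G)\otimes\B(H) \to (\Cu(\G)/\Cl(\G))\otimes\B(H)$ --- an exactness-type statement relative to the non-exact algebra $\B(H)$ that none of your inputs delivers: finite-rank approximants of $\id_{\Cl(\G)}$, extended to $\Cu(\G)$ by Hahn--Banach, carry no convergence on $x$ precisely because $x$ is not yet known to lie in $\Cl(\G)\otimes\B(H)$. For $S \subseteq \K(H)$ this hole can be patched by compressing with $1 \otimes p_n$ for finite-rank projections $p_n \nearrow 1$: the compressions converge in norm on $\Cu(\G)\otimes\K(H)$ and their matrix entries are right slices, hence lie in $\Cl(\G)\otimes\K(H)$; this would salvage (i) $\Rightarrow$ (ii) (modulo the first issue). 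But no such compression argument exists for $\B(H)$, and since your cycle reaches (iii) only through this implication, the equivalence with (iii) is not established; the $\B(H)$ case really does require Zacharias's direct multiplier proof rather than the slice-map formalism.
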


\begin{rem}
\begin{enumerate}[(i)]
\item Groups with the AP are exact by \cite[Theorem 2.1]{MR1220905} and the discussion following it.
\item Group $\G$ has the ITAP if and only if the identity $\Cl(\G) \otimes S = \Cu(\G)^{\G} \otimes S$ holds for all operator spaces $S \subseteq \K(H)$ if and only if the identity $\Cl(\G) \otimes S = \Cu(\G)^{\G} \otimes S$ holds for all operator spaces $S \subseteq \B(H)$. This is trivially true.
\end{enumerate}
\end{rem}
At the end of \cite{MR2215118}, Zacharias asked the following: 

\begin{ques}\label{ques Z} Is it true that if $\G$ is a discrete group, then 
	\begin{equation*}
	\Cu(\G)^{\G} \otimes S = (\Cu(\G) \otimes S)^{\G}
	\end{equation*} 
for all operator spaces  $S \subseteq \K(H)$?
\end{ques}

Our main result is the following, which answers Question~\ref{ques Z} in the negative.
\begin{thm}\label{thm main} Let $\G$ be a countable discrete exact group. Then the following  are equivalent.
\begin{enumerate}
\item\label{it AP} The group $\G$ has the AP.
\item\label{it Slice} The equality $\Cu(\G)^{\G} \otimes S = (\Cu(\G) \otimes S)^{\G}$ holds for all operator spaces $S \subseteq \K(H)$.
\item The equality $\Cu(\G)^{\G} \otimes S = (\Cu(\G) \otimes S)^{\G}$ holds for all operator spaces $S \subseteq \B(H)$.
\end{enumerate}
\end{thm}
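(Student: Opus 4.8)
The plan is to deduce Theorem~\ref{thm main} from Zacharias' Theorem~\ref{thm Zac} together with two ingredients: the slice-map characterisation of the operator approximation property (OAP) of an operator space, and the equivalence of the OAP of the invariant Roe algebra $\Cu(\G)^{\G}$ with the AP of $\G$. The implication (\ref{it AP})$\Rightarrow$(3) is the easy half. Since groups with the AP have the ITAP, the AP gives $\Cu(\G)^{\G} = \Cl(\G)$; Theorem~\ref{thm Zac} then supplies $\Cl(\G) \otimes S = (\Cu(\G) \otimes S)^{\G}$ for every $S \subseteq \B(H)$, and substituting $\Cl(\G) = \Cu(\G)^{\G}$ yields (3). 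As (3)$\Rightarrow$(\ref{it Slice}) is trivial (because $\K(H) \subseteq \B(H)$), everything reduces to the implication (\ref{it Slice})$\Rightarrow$(\ref{it AP}).

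For this I would first record the identity
\[
(\Cu(\G) \otimes S)^{\G} = \bigl\{ u \in \Cu(\G) \otimes_{\min} S : (\id \otimes \omega)(u) \in \Cu(\G)^{\G} \text{ for all } \omega \in S^{*} \bigr\}.
\]
This holds because the $\G$-action $\Ad(\rho) \otimes \id$ slices through the $S$-variable, the slice map $\id \otimes \omega$ intertwining $\Ad(\rho_{g}) \otimes \id$ with $\Ad(\rho_{g})$; since the slice maps separate points, $\G$-invariance of $u$ is equivalent to $\G$-invariance of each slice $(\id \otimes \omega)(u) \in \Cu(\G)$, i.e.\ to membership of each slice in $\Cu(\G)^{\G} = \Cu(\G) \cap \L(\G)$. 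Consequently, condition~(\ref{it Slice}) says precisely that $\Cu(\G)^{\G}$ has the slice-map property inside the ambient algebra $\Cu(\G)$ for every $S \subseteq \K(H)$.

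Next I would enlarge the ambient algebra from $\Cu(\G)$ to $\B(l^{2}\G)$, and this is where exactness is used: exactness of $\G$ makes $\Cu(\G)$ exact (cf.\ \cite{MR2391387}) and, in fact, endows $\Cu(\G)$ with the slice-map property relative to $\B(l^{2}\G)$, so that any $u \in \B(l^{2}\G) \otimes_{\min} S$ all of whose slices lie in $\Cu(\G)$ already lies in $\Cu(\G) \otimes_{\min} S$. Feeding this into the displayed identity converts condition~(\ref{it Slice}) into the slice-map property of $\Cu(\G)^{\G}$ relative to $\B(l^{2}\G)$ for all $S \subseteq \K(H)$, which by the standard slice-map characterisation of the OAP is equivalent to $\Cu(\G)^{\G}$ having the OAP.

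It then remains to show that $\Cu(\G)^{\G}$ has the OAP if and only if $\G$ has the AP, and I expect this to be the main obstacle. The forward direction is the delicate one: starting from finite-rank completely bounded maps $\psi_{i}$ on $\Cu(\G)^{\G}$ witnessing the OAP, I would extract Herz--Schur multipliers by setting $\phi_{i}(g) = \tau\bigl(\psi_{i}(\lambda(g))\lambda(g)^{*}\bigr)$, using the canonical trace $\tau$ on $\L(\G)$ and the inclusion $\Cl(\G) \subseteq \Cu(\G)^{\G}$; the finite rank of $\psi_{i}$ should force $\phi_{i} \in A(\G)$, while the convergence $\psi_{i} \to \id$ translates into convergence $\phi_{i} \to 1$ in the $\sigma(B_{2}(\G), Q(\G))$-topology, giving the AP. The reverse direction is softer: the multipliers $m_{\phi_{i}}$ with $\phi_{i} \in A(\G)$ furnished by the AP act compatibly on $\Cl(\G)$, $\Cu(\G)$, $\L(\G)$ and $\Cu(\G)^{\G}$ and restrict to finite-rank completely bounded approximants of the identity on $\Cu(\G)^{\G}$. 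The chief difficulties are verifying that exactness genuinely delivers the slice-map property for $\Cu(\G)$ used above, and controlling the interplay of topologies in the multiplier-extraction argument, since $\Cu(\G)^{\G}$ is only weak$^{*}$-dense in $\L(\G)$ whereas the Haagerup--Kraus correspondence is phrased in terms of $\Cl(\G)$ and $\L(\G)$.
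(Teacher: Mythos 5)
Your overall architecture parallels the paper's (reduce to (\ref{it Slice}) $\Rightarrow$ (\ref{it AP}); rewrite (\ref{it Slice}) as a slice map property --- your displayed identity is exactly the paper's Corollary~\ref{cor cug}; deduce the OAP of $\Cu(\G)^{\G}$; conclude the AP), but the pivotal step rests on a false principle, and you rightly single it out as the chief difficulty. You claim that exactness of $\Cu(\G)$ ``endows $\Cu(\G)$ with the slice-map property relative to $\B(l^{2}\G)$''. Exactness of a \cast-algebra does not imply this property, and it cannot: exactness passes to subalgebras, so the same principle would apply to $\Cu(\G)^{\G}$; applying it to any $u \in \Cu(\G) \otimes S \subseteq \B(l^{2}\G) \otimes S$ whose slices lie in $\Cu(\G)^{\G}$ would put $u$ in $\Cu(\G)^{\G} \otimes S$, so condition (\ref{it Slice}) would hold \emph{automatically} for every exact group. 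Since exact groups without the AP exist ($SL_{3}(\Z)$), this contradicts the equivalence being proved --- equivalently, it would answer Question~\ref{ques Z} affirmatively for all exact groups, which the paper shows is false. The statement you want about $\Cu(\G)$ is true, but for a different reason, and this is what the paper's Theorem~\ref{thm cug slice} uses: by Ozawa's theorem \cite{MR1763912}, exactness of $\G$ makes $\Cu(\G)$ \emph{nuclear}, hence $\Cu(\G)$ has the OAP, and Kraus's theorem \cite{MR1138840} converts the OAP into the slice map property. Nuclearity (or at least the OAP) of $\Cu(\G)$, not its exactness, is the operative hypothesis.

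Two further gaps remain even after that repair. First, your closing inference ``which by the standard slice-map characterisation of the OAP is equivalent to $\Cu(\G)^{\G}$ having the OAP'' slices in the wrong variable: Kraus's criterion for the OAP of $T = \Cu(\G)^{\G}$ concerns elements $x \in T \otimes \K(H)$ whose slices under functionals on $T$ lie in $S$, whereas your statement concerns elements of $\B(l^{2}\G) \otimes S$ whose slices under functionals on $S$ lie in $T$. To pass from the former to the latter one must first show that such an $x$ lies in $\B(l^{2}\G) \otimes S$ at all, and this again requires the OAP of the intermediate algebra $\Cu(\G)$; this change-of-ambient bookkeeping is precisely the content of the paper's Lemma~\ref{lem aotimes} and Corollary~\ref{cor trans}, which your argument would need to reproduce. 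Second, the implication [$\Cu(\G)^{\G}$ has the OAP $\Rightarrow$ $\G$ has the AP], which you leave as a sketch with acknowledged analytic difficulties, does not require redoing the Haagerup--Kraus multiplier extraction on $\Cu(\G)^{\G}$: since $\Cl(\G) \subseteq \Cu(\G)^{\G} \subseteq \L(\G)$, the finite-rank maps witnessing the OAP of $\Cu(\G)^{\G}$ restrict and corestrict to finite-rank maps $\Cl(\G) \to \L(\G)$ converging to the inclusion $\imath_{\G}$ in the stable point-norm topology, and the Haagerup--Kraus proof of \cite[Theorem~2.1, (c) $\Rightarrow$ (a)]{MR1220905} only needs this, i.e.\ the OAP of $\imath_{\G}$ --- this is the paper's Proposition~\ref{prop imath}. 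That observation also dissolves your final worry about $\Cu(\G)^{\G}$ being merely weak$^{*}$-dense in $\L(\G)$: the multiplier argument never needs the approximants to land in (or be defined on) $\Cu(\G)^{\G}$ itself, only to map $\Cl(\G)$ into $\L(\G)$.
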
 

Since $SL_{3}(\Z)$ does not have the AP by \cite{MR2838352}, we get the following as a corollary.
\begin{cor} Let $\G = SL_{3}(\Z)$. Then there exists an operator space $S \subseteq \K(H)$ such that
	\begin{equation*}
	\Cu(\G)^{\G} \otimes S \neq (\Cu(\G) \otimes S)^{\G}.
	\end{equation*}
\qed	
\end{cor}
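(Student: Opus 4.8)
The corollary follows immediately from Theorem~\ref{thm main}: the group $SL_{3}(\Z)$ is exact (it is a finitely generated linear group) and fails the AP by \cite{MR2838352}, so condition~\ref{it AP} of Theorem~\ref{thm main} is violated; by the equivalence \ref{it AP}~$\Leftrightarrow$~\ref{it Slice}, condition~\ref{it Slice} then fails as well, which is exactly the assertion that some operator space $S \subseteq \K(H)$ satisfies $\Cu(\G)^{\G} \otimes S \neq (\Cu(\G) \otimes S)^{\G}$. The mathematical content therefore lies in Theorem~\ref{thm main}, and I sketch how I would prove it; the only nontrivial implication is \ref{it Slice}~$\Rightarrow$~\ref{it AP}.

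The easy implications come first. For \ref{it AP}~$\Rightarrow$ the third condition I would combine Theorem~\ref{thm Zac} with the chain of inclusions $\Cl(\G) \otimes S \subseteq \Cu(\G)^{\G} \otimes S \subseteq (\Cu(\G) \otimes S)^{\G}$: if $\G$ has the AP, then part~(iii) of Theorem~\ref{thm Zac} yields $\Cl(\G) \otimes S = (\Cu(\G) \otimes S)^{\G}$ for every $S \subseteq \B(H)$, so the two outer terms coincide and the squeeze forces $\Cu(\G)^{\G} \otimes S = (\Cu(\G) \otimes S)^{\G}$. The passage from the third condition to \ref{it Slice} is the trivial restriction from $S \subseteq \B(H)$ to $S \subseteq \K(H)$.

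For the crux \ref{it Slice}~$\Rightarrow$~\ref{it AP}, the first step is to reinterpret the invariants as a slice (Fubini) product. Since the action is $\Ad(\rho) \otimes \id_{S}$ with $S$ carrying the trivial action, each first-variable slice $\id \otimes \phi$, $\phi \in S^{*}$, is $\G$-equivariant and maps $\Cu(\G) \otimes S$ into $\Cu(\G)$; invoking the commutation theorem $(\rho(\G) \otimes 1)' = \L(\G) \overline{\otimes} \B(H)$ shows that $x \in \Cu(\G) \otimes S$ is $\G$-invariant if and only if $(\id \otimes \phi)(x) \in \Cu(\G) \cap \L(\G) = \Cu(\G)^{\G}$ for every $\phi$. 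Hence
\begin{equation*}
(\Cu(\G) \otimes S)^{\G} = \bigl\{x \in \Cu(\G) \otimes S \mid (\id \otimes \phi)(x) \in \Cu(\G)^{\G} \text{ for all } \phi \in S^{*}\bigr\},
\end{equation*}
so condition~\ref{it Slice} asserts precisely that the invariant algebra $\Cu(\G)^{\G}$ has the slice map property against every $S \subseteq \K(H)$. The second step converts this into the AP. I would use that $\Cl(\G) \subseteq \Cu(\G)^{\G} \subseteq \L(\G)$ with $\Cl(\G)'' = \L(\G)$, so $\Cu(\G)^{\G}$ is weak\textsuperscript{$*$}-dense in $\L(\G)$, together with the Haagerup--Kraus characterisation of the AP (namely that $\G$ has the AP if and only if $\L(\G)$ has the weak\textsuperscript{$*$} operator approximation property, equivalently $\Cl(\G)$ has the operator approximation property). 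The bridge is supplied by the finitely supported Herz--Schur multipliers $M_{u} \colon [\alpha_{s,t}] \mapsto [u(st^{-1}) \alpha_{s,t}]$, $u \in A(\G)$, which preserve each of $\Cl(\G)$, $\Cu(\G)^{\G}$ and $\Cu(\G)$ with uniformly controlled completely bounded norm and finite band support; the slice map property for $\Cu(\G)^{\G}$ then delivers the net of finite-rank maps witnessing the approximation property for $\Cl(\G)$ and $\L(\G)$, exactness of $\G$ entering (as in Theorem~\ref{thm Zac}) to guarantee that the slice map property is equivalent to the operator approximation property.

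The step I expect to be the main obstacle is precisely this last transfer: obtaining the AP of $\G$ from the slice map property of $\Cu(\G)^{\G}$ \emph{without} appealing to the merely conjectural identity $\Cl(\G) = \Cu(\G)^{\G}$. One has to promote a norm-level statement tested only against $S \subseteq \K(H)$ to the weak\textsuperscript{$*$}-level approximation property of $\L(\G)$, which requires careful bookkeeping of the several closures involved and of the restriction to compact operators; the multiplier mechanism above is the device I would rely on to carry this out.
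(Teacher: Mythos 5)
Your handling of the corollary itself is exactly the paper's: it is an immediate consequence of Theorem~\ref{thm main}, since $SL_{3}(\Z)$ is exact and fails the AP by \cite{MR2838352}, so the failure of condition~(\ref{it AP}) forces the failure of condition~(\ref{it Slice}). The paper gives no further argument at this level, so on the statement in question your proposal is correct and identical in approach.

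Since you went on to sketch the proof of Theorem~\ref{thm main}, which carries all the content, a comparison there is worth recording. Your first step --- recasting the invariance condition as a slice map (Fubini) property, and using exactness of $\G$ to get nuclearity, hence the OAP, of $\Cu(\G)$, hence Kraus's slice map theorem --- is precisely the paper's argument in Section~\ref{sec SMP}, up to the cosmetic difference that the paper verifies $F(T^{\G}, S, T \otimes S) = (T \otimes S)^{\G}$ by an elementary equivariance-and-separation-of-points argument rather than by the commutation theorem. The final transfer, however --- the step you explicitly flag as the main obstacle --- is closed in the paper by a device different from, and much cleaner than, the Herz--Schur multiplier mechanism you outline: Proposition~\ref{prop imath}, the observation that the Haagerup--Kraus proof of ``AP $\Leftrightarrow$ $\Cl(\G)$ has the OAP'' only ever uses finite-rank approximants defined on $\Cl(\G)$ with values in $\L(\G)$, i.e.\ only the OAP of the inclusion map $\imath_{\G}\colon \Cl(\G) \to \L(\G)$. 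Once $\Cu(\G)^{\G}$ has the OAP (Corollary~\ref{cor cug oap}), its approximating finite-rank maps restrict to $\Cl(\G)$ and corestrict into $\L(\G)$ because $\Cl(\G) \subseteq \Cu(\G)^{\G} \subseteq \L(\G)$, so $\imath_{\G}$ has the OAP and the AP of $\G$ follows (Corollary~\ref{cor cuop ap}) --- with no multiplier bookkeeping, no weak-$*$ density argument, and no appeal to the conjectural identity $\Cl(\G) = \Cu(\G)^{\G}$. So the obstacle you anticipated is genuine in your sketch as written, but it is resolved by inspecting the Haagerup--Kraus proof rather than by the machinery you propose; if you intend to complete your route independently, that inspection (or an equivalent substitute for Proposition~\ref{prop imath}) is the missing ingredient.
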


We also show that various approximation properties hold for $\Cl(\G)$ if and only if they hold for $\Cu(\G)^{\G}$. See Theorem~\ref{thm app} for the precise statement.

\begin{ack} The authors were partially supported by the Danish Research Council through the Centre for Symmetry and Deformation at the University of Copenhagen. The first author was also partially supported
by the Japan Society for the Promotion of Science.
The second author is an EPSRC fellow.
\end{ack}

\section{Proof of the Main Theorem (\ref{thm main})}
\subsection{The Slice Map Property}\label{sec SMP} 
We start with some definitions.
All tensor products we consider are spatial (minimal) ones. 

\begin{defn}[{cf.\ \cite{MR0410402}}] Let $T \subseteq A$ and $S \subseteq B$ be operator spaces. The {\em Fubini product} $F(T, S, A \otimes B)$ of $T$ and $S$ 
is defined as the set of all $x \in A \otimes B$ such that $(\phi \otimes \id_{B})(x) \in S$ for all $\phi \in A^{*}$ and $(\id_{A} \otimes \psi)(x) \in T$ for all $\psi \in B^{*}$. 

\end{defn}
\begin{rem} We always have 
	\begin{equation}\label{eq F}
	T \otimes S \subseteq F(T, S, A \otimes B) = F(T, B, A \otimes B) \cap F(A, S, A \otimes B) \subseteq A \otimes B.
	\end{equation}
\end{rem}

\begin{defn}
We say that $(T, S, A \otimes B)$ has the {\em slice map property} if 
	\begin{equation*}
	F(T, S, A \otimes B) = T \otimes S.
	\end{equation*}
We say that $A$ has the slice map property for $B$ if $(A, S, A \otimes B)$ has the slice map property for all operator spaces $S \subseteq B$.
\end{defn}

The following is a simple but key result that connects fixed points to the slice map property.

\begin{prop} Let $\G$ be a countable discrete group and let $S$ and $T$ be operator spaces. Suppose that $\G$ is acting on $T$ by completely bounded maps. Letting $\G$ act trivially on $S$, we get a $\G$-action on $T \otimes S$. Then we have
	\begin{equation*}
	F(T^{\G}, S, T \otimes S) = (T \otimes S)^{\G}.
	\end{equation*}
\end{prop}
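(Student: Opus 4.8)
The plan is to unwind the definition of the Fubini product in this special case and then exploit the compatibility of the right slice maps with the $\G$-action. First I would fix notation: write $\theta_{g} \colon T \to T$ for the completely bounded map implementing the action of $g \in \G$, so that $T^{\G} = \{t \in T : \theta_{g}(t) = t \text{ for all } g \in \G\}$ and the induced action on $T \otimes S$ is $x \mapsto (\theta_{g} \otimes \id_{S})(x)$. The hypothesis that each $\theta_{g}$ is completely bounded is exactly what guarantees that $\theta_{g} \otimes \id_{S}$ extends to a bounded map on the (minimal) tensor product $T \otimes S$, so the action is well defined. For $\psi \in S^{*}$ let $r_{\psi} \coloneqq \id_{T} \otimes \psi \colon T \otimes S \to T$ denote the right slice map. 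Interpreting the ambient space of the Fubini product as $T \otimes S$ (so that $A = T$ and $B = S$), the left slice condition $(\phi \otimes \id_{S})(x) \in S$ is automatic for every $x \in T \otimes S$, and hence
\[
F(T^{\G}, S, T \otimes S) = \{ x \in T \otimes S : r_{\psi}(x) \in T^{\G} \text{ for all } \psi \in S^{*} \}.
\]

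Next I would record two elementary facts. The first is the intertwining relation $r_{\psi} \circ (\theta_{g} \otimes \id_{S}) = \theta_{g} \circ r_{\psi}$ for all $g \in \G$ and $\psi \in S^{*}$; this is checked on elementary tensors $t \otimes s$, where both sides equal $\psi(s)\,\theta_{g}(t)$, and then extended by linearity and continuity. The second is that the family $\{r_{\psi}\}_{\psi \in S^{*}}$ is jointly injective: if $r_{\psi}(y) = 0$ for all $\psi \in S^{*}$, then $(\phi \otimes \psi)(y) = \phi(r_{\psi}(y)) = 0$ for all $\phi \in T^{*}$ and $\psi \in S^{*}$, and since the product functionals $\phi \otimes \psi$ separate the points of the minimal tensor product, $y = 0$. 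This separation property is the one place where minimality of the tensor product is essential, and I expect it to be the main (if mild) point to pin down carefully.

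With these in hand both inclusions are short. For $(T \otimes S)^{\G} \subseteq F(T^{\G}, S, T \otimes S)$, I would take a fixed point $x$; for every $\psi$ and $g$ the intertwining relation gives $\theta_{g}(r_{\psi}(x)) = r_{\psi}((\theta_{g} \otimes \id_{S})(x)) = r_{\psi}(x)$, so $r_{\psi}(x) \in T^{\G}$. For the reverse inclusion I would take $x$ with $r_{\psi}(x) \in T^{\G}$ for all $\psi$, fix $g \in \G$, and set $y \coloneqq (\theta_{g} \otimes \id_{S})(x) - x$. Then $r_{\psi}(y) = \theta_{g}(r_{\psi}(x)) - r_{\psi}(x) = 0$ for every $\psi$, since $\theta_{g}$ fixes $r_{\psi}(x)$; by joint injectivity $y = 0$, that is $(\theta_{g} \otimes \id_{S})(x) = x$. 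As $g$ was arbitrary, $x \in (T \otimes S)^{\G}$, which completes the argument.
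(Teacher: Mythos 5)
Your proof is correct and follows essentially the same route as the paper: the intertwining relation $r_{\psi}\circ(\theta_{g}\otimes\id_{S})=\theta_{g}\circ r_{\psi}$ gives the inclusion $(T\otimes S)^{\G}\subseteq F(T^{\G},S,T\otimes S)$, and the converse follows by applying all right slice maps to $x-(\theta_{g}\otimes\id_{S})(x)$ and invoking the fact that $\id_{T}\otimes S^{*}$ separates points of $T\otimes S$. The only difference is that you spell out details the paper leaves implicit (why the left slice condition is vacuous here, and why joint injectivity of the slice maps follows from separation by product functionals on the minimal tensor product), which is fine.
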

\begin{proof} Let $x \in T \otimes S$. Then for any $\psi \in S^{*}$ and $g \in \G$, we have
	\begin{equation}\label{eq square}
	g((\id_{T} \otimes \psi)(x)) = (\id_{T} \otimes \psi) (g \otimes \id_{S})(x).	
	\end{equation}
Hence if $x \in (T \otimes S)^{\G}$, then $(\id_{T} \otimes \psi)(x) \in T^{\G}$ for all $\psi \in S^{*}$. It follows that $(T \otimes S)^{\G} \subseteq F(T^{\G}, S, T \otimes S)$. Conversely, if $x \in F(T^{\G}, S, T \otimes S)$, then for any $\psi \in S^{*}$ and $g \in \G$, we have $(\id_{T} \otimes \psi)(x - (g \otimes \id_{S})(x)) = 0$ by (\ref{eq square}). Since $\id_{T} \otimes S^{*}$ separates points of $T \otimes S$, we see that $x \in (T \otimes S)^{\G}$.
\end{proof}

\begin{cor}\label{cor cug} Let $S \subseteq \B(H)$ be an operator space. Then $\Cu(\G)^{\G} \otimes S = (\Cu(\G) \otimes S)^{\G}$ if and only if $(\Cu(\G)^{\G}, S, \Cu(\G) \otimes S)$ has the slice map property.\qed
\end{cor}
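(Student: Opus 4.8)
The plan is to obtain this corollary as an immediate consequence of the preceding Proposition, applied with $T = \Cu(\G)$. First I would verify the hypotheses of that Proposition for this choice of $T$. The relevant $\G$-action on $\Cu(\G)$ is the restriction of the adjoint action $\Ad(\rho)$, which the introduction records as preserving $\Cu(\G)$; since this action is by $*$-automorphisms, it is in particular by completely bounded (indeed completely isometric) maps, so the Proposition applies. Moreover, by the definition recalled in the introduction, the fixed-point space for this action is exactly $\Cu(\G)^{\G}$, so $T^{\G} = \Cu(\G)^{\G}$ in the notation of the Proposition.

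With these identifications, the Proposition gives
\begin{equation*}
F(\Cu(\G)^{\G}, S, \Cu(\G) \otimes S) = (\Cu(\G) \otimes S)^{\G}.
\end{equation*}
On the other hand, by definition the triple $(\Cu(\G)^{\G}, S, \Cu(\G) \otimes S)$ has the slice map property precisely when
\begin{equation*}
F(\Cu(\G)^{\G}, S, \Cu(\G) \otimes S) = \Cu(\G)^{\G} \otimes S.
\end{equation*}

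Combining the two displayed identities, the slice map property for this triple holds if and only if $(\Cu(\G) \otimes S)^{\G} = \Cu(\G)^{\G} \otimes S$, which is exactly the equivalence asserted in the corollary. I do not expect any genuine obstacle here: the entire substance is carried by the Proposition (whose proof rests on the fact that $\id_{T} \otimes S^{*}$ separates points of $T \otimes S$), and the corollary is merely a restatement that translates the fixed-point description of $(\Cu(\G) \otimes S)^{\G}$ into the language of Fubini products and the slice map property. The only points meriting a word of care are the two verifications above, namely that the action is by completely bounded maps and that its fixed points are $\Cu(\G)^{\G}$, both of which are already recorded in the introduction.
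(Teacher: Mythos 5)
Your proposal is correct and is precisely the argument the paper intends: the corollary is stated with a \qed because it follows immediately from the preceding Proposition applied with $T = \Cu(\G)$ equipped with the $\Ad(\rho)$-action, exactly as you do. Your two verifications (that the action is by completely bounded maps and that its fixed points are $\Cu(\G)^{\G}$) are the right points of care, and the rest is, as you say, just unwinding the definition of the slice map property against the identity $F(\Cu(\G)^{\G}, S, \Cu(\G) \otimes S) = (\Cu(\G) \otimes S)^{\G}$.
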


\begin{lem}\label{lem aotimes} Let $T \subseteq A$ and $S \subseteq B$ be operator spaces. Suppose that $(A, S, A \otimes B)$ has the slice map property. Then
	\begin{equation*}
	F(T, S, A \otimes S) = F(T, S, A \otimes B)
	\end{equation*}
\end{lem}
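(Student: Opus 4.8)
The plan is to reduce both Fubini products to explicitly described subsets of $A \otimes S$, cut out by slice-map conditions, and then identify those subsets by a Hahn--Banach argument. The hypothesis that $(A, S, A \otimes B)$ has the slice map property enters in exactly one place: it guarantees that every element of $F(T, S, A \otimes B)$ already lies in $A \otimes S$, so that both sides have the same underlying space and only the ``$T$-conditions'' need to be matched.

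First I would rewrite the right-hand side. By the factorisation in \eqref{eq F} we have $F(T, S, A \otimes B) = F(T, B, A \otimes B) \cap F(A, S, A \otimes B)$, and the slice map property gives $F(A, S, A \otimes B) = A \otimes S$. Hence
	\begin{equation*}
	F(T, S, A \otimes B) = F(T, B, A \otimes B) \cap (A \otimes S) = \{x \in A \otimes S \mid (\id_{A} \otimes \psi)(x) \in T \text{ for all } \psi \in B^{*}\}.
	\end{equation*}
On the other hand, since $S$ is the whole second factor of $A \otimes S$, the condition coming from $A^{*}$ in the definition of $F(T, S, A \otimes S)$ is vacuous, so
	\begin{equation*}
	F(T, S, A \otimes S) = \{x \in A \otimes S \mid (\id_{A} \otimes \psi)(x) \in T \text{ for all } \psi \in S^{*}\}.
	\end{equation*}
It then remains to check that, for $x \in A \otimes S$, the family of slice conditions indexed by $B^{*}$ and the family indexed by $S^{*}$ cut out the same subset of $A \otimes S$.

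The remaining point is that for $x \in A \otimes S$ the slice map $\id_{A} \otimes \psi$ depends only on the restriction $\psi|_{S}$: the two bounded maps $\id_{A} \otimes \psi$ and $\id_{A} \otimes (\psi|_{S})$ agree on elementary tensors $a \otimes s$ with $a \in A$, $s \in S$, and hence, by linearity and continuity, on all of $A \otimes S$. Granting this, both inclusions are immediate. If $x$ satisfies the $B^{*}$-conditions and $\psi \in S^{*}$ is arbitrary, I would extend $\psi$ to $\tilde\psi \in B^{*}$ by Hahn--Banach and use $(\id_{A} \otimes \psi)(x) = (\id_{A} \otimes \tilde\psi)(x) \in T$; conversely, if $x$ satisfies the $S^{*}$-conditions and $\psi \in B^{*}$ is arbitrary, then $(\id_{A} \otimes \psi)(x) = (\id_{A} \otimes (\psi|_{S}))(x) \in T$. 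This yields the desired equality. I do not expect a genuine obstacle here: the essential use of the hypothesis is the single application of the slice map property to place $x$ in $A \otimes S$, after which everything is Hahn--Banach bookkeeping, the only mild care being the continuity argument showing that slice maps see only $\psi|_{S}$.
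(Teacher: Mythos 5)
Your proof is correct and is essentially the paper's own argument: both use the decomposition from \eqref{eq F} together with the hypothesis to identify $F(A,S,A\otimes B)$ with $A \otimes S$, and then match the $T$-slice conditions indexed by $S^{*}$ and $B^{*}$ via Hahn--Banach extension and the fact that slice maps on $A \otimes S$ depend only on $\psi|_{S}$. The paper compresses this last step into the single sentence ``any $\psi \in S^{*}$ extends to an element of $B^{*}$''; you have merely spelled out the same bookkeeping in full.
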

\begin{proof} By assumption, we have
	\begin{equation*}
	A \otimes S = F(A, S, A \otimes B).
	\end{equation*}
On the other hand, any $\psi \in S^{*}$ extends to an element of $B^{*}$, hence 
	\begin{equation*}
	F(T, S, A \otimes S) = (A \otimes S) \cap F(T, B, A \otimes B).
	\end{equation*}
It follows that $F(T, S, A \otimes S) = F(T, S, A \otimes B)$, by (\ref{eq F}).
\end{proof}

\begin{cor}\label{cor trans} Let $T \subseteq A$ and $S \subseteq B$ be operator spaces. Suppose that $(A, S, A \otimes B)$ and $(T, S, A \otimes S)$ have the slice map property. Then $(T, S, T \otimes B)$ also has the slice map property.
\end{cor}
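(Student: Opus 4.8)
The plan is to first pin down $F(T, S, A \otimes B)$ exactly, using the two hypotheses together with Lemma~\ref{lem aotimes}, and then to observe that the Fubini product $F(T, S, T \otimes B)$ we care about is squeezed between $T \otimes S$ and $F(T, S, A \otimes B)$ for purely formal reasons. Concretely, I would begin by invoking Lemma~\ref{lem aotimes}: since $(A, S, A \otimes B)$ has the slice map property, we have $F(T, S, A \otimes S) = F(T, S, A \otimes B)$. Combining this with the second hypothesis, namely that $(T, S, A \otimes S)$ has the slice map property and hence $F(T, S, A \otimes S) = T \otimes S$, yields the key identity
	\begin{equation*}
	F(T, S, A \otimes B) = T \otimes S.
	\end{equation*}

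Next I would establish the inclusion $F(T, S, T \otimes B) \subseteq F(T, S, A \otimes B)$ by a restriction argument. Let $x \in F(T, S, T \otimes B)$, so that $x \in T \otimes B \subseteq A \otimes B$. To check the two defining conditions of $F(T, S, A \otimes B)$, note first that for any $\phi \in A^{*}$ the restriction $\phi|_{T}$ lies in $T^{*}$, and since $x \in T \otimes B$ we have $(\phi \otimes \id_{B})(x) = (\phi|_{T} \otimes \id_{B})(x)$, which lies in $S$ because $x$ belongs to the Fubini product with ambient space $T \otimes B$. Second, for any $\psi \in B^{*}$, again because $x \in T \otimes B$ we have $(\id_{A} \otimes \psi)(x) = (\id_{T} \otimes \psi)(x) \in T$ automatically. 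Thus $x \in F(T, S, A \otimes B) = T \otimes S$.

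Finally, I would combine this with the always-valid inclusion $T \otimes S \subseteq F(T, S, T \otimes B)$ from (\ref{eq F}) to conclude $F(T, S, T \otimes B) = T \otimes S$, which is exactly the assertion that $(T, S, T \otimes B)$ has the slice map property. There is no analytic difficulty here: the only point requiring care is the bookkeeping in the restriction step, where one must verify that slicing an element of $T \otimes B$ by a functional on the larger algebra $A$ agrees with slicing by its restriction to $T$, and that the condition $(\id_{A} \otimes \psi)(x) \in T$ is automatic on $T \otimes B$. All the real content of the corollary is absorbed into the first reduction via Lemma~\ref{lem aotimes}.
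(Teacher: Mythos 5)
Your proof is correct and takes essentially the same route as the paper: the paper's argument is precisely the identity $F(T,S,A\otimes B)=T\otimes S$ (from Lemma~\ref{lem aotimes} combined with the slice map property of $(T,S,A\otimes S)$), sandwiched with the inclusions $T\otimes S\subseteq F(T,S,T\otimes B)\subseteq F(T,S,A\otimes B)$, which it records as a commutative diagram. The only difference is that you verify the inclusion $F(T,S,T\otimes B)\subseteq F(T,S,A\otimes B)$ explicitly via the restriction argument, a step the paper leaves implicit as the right-hand vertical arrow of its diagram.
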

\begin{proof} Follows from Lemma~\ref{lem aotimes} and the following commutative diagram of inclusions:
	\begin{equation*}
	\xymatrix{
	F(T, S, A \otimes S) \ar@{^{(}->}[r] & F(T, S, A \otimes B)\\
	T \otimes S \ar@{^{(}->}[r] \ar@{^{(}->}[u]& F(T, S, T \otimes B)  \ar@{^{(}->}[u]\\
 	}.
	\end{equation*}
\end{proof}

\begin{thm}\label{thm cug slice} Let $\G$ be a countable {\em exact} discrete group. If the equality 
	\begin{equation*}
	\Cu(\G)^{\G} \otimes S = (\Cu(\G) \otimes S)^{\G}
	\end{equation*}
holds for all operator spaces $S \subseteq \K(H)$, then $\Cu(\G)^{\G}$ has the slice map property for $\K(H)$.
\end{thm}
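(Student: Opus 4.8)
The plan is to deduce the slice map property of the invariant subalgebra $\Cu(\G)^{\G}$ from that of the ambient algebra $\Cu(\G)$ by means of the transitivity result, Corollary~\ref{cor trans}. Put $A = \Cu(\G)$, $T = \Cu(\G)^{\G} \subseteq A$ and $B = \K(H)$, and fix an arbitrary operator space $S \subseteq \K(H)$. Corollary~\ref{cor trans} asserts that $(T, S, T \otimes B)$ has the slice map property as soon as both $(A, S, A \otimes B)$ and $(T, S, A \otimes S)$ do. The latter is precisely the hypothesis: by Corollary~\ref{cor cug}, the assumed identity $\Cu(\G)^{\G} \otimes S = (\Cu(\G) \otimes S)^{\G}$ is equivalent to $(\Cu(\G)^{\G}, S, \Cu(\G) \otimes S)$ having the slice map property. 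Thus the only extra ingredient required is that $A = \Cu(\G)$ itself has the slice map property for $\K(H)$, i.e.\ that $(\Cu(\G), S, \Cu(\G) \otimes \K(H))$ has the slice map property for every $S \subseteq \K(H)$.

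Establishing this is the one place where exactness is used, and it is the substantive step. I would argue that $\Cu(\G)$ is in fact \emph{nuclear}. Recall that $\Cu(\G)$ is canonically isomorphic to the reduced crossed product of the translation action of $\G$ on $l^{\infty}(\G)$. Exactness of $\G$ is equivalent to amenability of this action (equivalently, of the action on $\beta\G$), and a reduced crossed product of an amenable action with nuclear --- here commutative --- coefficient algebra is nuclear. Since a nuclear \cast-algebra has the completely positive approximation property, it has the slice map property with respect to every operator space, in particular with respect to $\K(H)$. This supplies the missing condition $(\Cu(\G), S, \Cu(\G) \otimes \K(H))$ for all $S \subseteq \K(H)$.

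Feeding both conditions into Corollary~\ref{cor trans} yields that $(\Cu(\G)^{\G}, S, \Cu(\G)^{\G} \otimes \K(H))$ has the slice map property; as $S \subseteq \K(H)$ was arbitrary, this is exactly the assertion that $\Cu(\G)^{\G}$ has the slice map property for $\K(H)$. The main obstacle is the nuclearity claim, and it is worth emphasising that here one genuinely needs nuclearity rather than mere exactness of $\Cu(\G)$: the slice map property for $\K(H)$ is the operator approximation property, which is strictly stronger than exactness (for instance $\Cl(SL_{3}(\Z))$ is exact but lacks it). What makes the argument work is precisely that exactness of $\G$ upgrades to outright nuclearity of the larger algebra $\Cu(\G)$, and the transitivity of Corollary~\ref{cor trans} then transports this strong property down to the invariant subalgebra $\Cu(\G)^{\G}$.
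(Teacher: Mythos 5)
Your proposal is correct and follows essentially the same route as the paper: translate the hypothesis via Corollary~\ref{cor cug}, establish that $\Cu(\G)$ is nuclear (the paper simply cites Ozawa's theorem that exactness of $\G$ gives nuclearity of $\Cu(\G)$, which is what your amenable-action/crossed-product sketch proves), deduce that $\Cu(\G)$ has the slice map property for $\K(H)$ (the paper goes through the OAP and Kraus's theorem rather than the CPAP), and conclude with Corollary~\ref{cor trans}.
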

\begin{proof} Let $T \coloneqq \Cu(\G)^{\G}$, $A \coloneqq \Cu(\G)$ and $B \coloneqq \K(H)$. By \cite[Theorem 3]{MR1763912}, the algebra $A$ is nuclear and therefore has the OAP (see Definition~\ref{defn oap}). From \cite[Theorem 5.5]{MR1138840} we see that $(A, S, A \otimes B)$ has the slice map property for all operator spaces $S \subseteq B$. By Corollary~\ref{cor cug}, the triple $(T, S, A \otimes S)$ has the slice map property for all operator spaces $S \subseteq B$. Now Corollary~\ref{cor trans} completes the proof.
\end{proof}

\subsection{Operator Approximation Property}\label{sec oap}

Now we relate the slice map property to the approximation property.
\begin{defn}[{cf.\ \cite{MR1060634}}]\label{defn oap} 
A completely bounded map $\theta\colon A \to B$ of operator spaces is said to have the {\em operator approximation property} (OAP) if there is a net of continuous finite-rank maps $F_{n}\colon A \to B$ converging to $\theta$ in the stable point-norm topology: $F_{n} \otimes \id_{\K(H)}(a) \to \theta \otimes \id_{\K(H)}(a)$ for all $a \in A \otimes \K(H)$ as $n \to \infty$.    
We say that an operator space $A$ has the OAP if $\id_{A}\colon A \to A$ has the OAP.
\end{defn}

By \cite[Theorem 5.5]{MR1138840}, a \cast-algebra has the OAP if and only if it has the slice map property for $\K(H)$. Combining with Theorem~\ref{thm cug slice}, we get the following.
\begin{cor}\label{cor cug oap} Let $\G$ be a countable {\em exact} discrete group. If the equality 
	\begin{equation*}
	\Cu(\G)^{\G} \otimes S = (\Cu(\G) \otimes S)^{\G}
	\end{equation*}
holds for all operator spaces $S \subseteq \K(H)$, then $\Cu(\G)^{\G}$ has the OAP.
\qed
\end{cor}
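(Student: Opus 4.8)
The plan is to combine Theorem~\ref{thm cug slice} with the characterisation of the OAP recalled immediately above the corollary, so that the argument reduces to a short two-step deduction. First I would apply Theorem~\ref{thm cug slice}: the hypothesis of the corollary is precisely the hypothesis of that theorem, so under the assumed equality $\Cu(\G)^{\G} \otimes S = (\Cu(\G) \otimes S)^{\G}$ for all operator spaces $S \subseteq \K(H)$ we conclude that $\Cu(\G)^{\G}$ has the slice map property for $\K(H)$. The exactness hypothesis on $\G$ is consumed here, inside Theorem~\ref{thm cug slice}, and is not needed again in the remainder of the argument.

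Next I would invoke \cite[Theorem 5.5]{MR1138840}, which asserts that a \cast-algebra has the OAP if and only if it has the slice map property for $\K(H)$. Applying this equivalence in the direction ``slice map property for $\K(H)$ implies OAP'' to $\Cu(\G)^{\G}$ yields the desired conclusion that $\Cu(\G)^{\G}$ has the OAP.

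Since both steps are direct citations, there is no genuine computation to carry out; the single point requiring care is that the cited equivalence is a statement about \cast-algebras, so one must first confirm that $\Cu(\G)^{\G}$ is itself a \cast-algebra before applying it. This is the only ``obstacle,'' and it is immediate from the identification $\Cu(\G)^{\G} = \Cu(\G) \cap \L(\G)$ noted in the introduction, which exhibits $\Cu(\G)^{\G}$ as the intersection of two \cast-subalgebras of $\B(l^{2}\G)$ and hence as a \cast-algebra. With that observation in place the proof closes, justifying the \qed.
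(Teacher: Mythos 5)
Your proof is correct and is exactly the paper's argument: the corollary's \qed reflects precisely the two citations you give, namely Theorem~\ref{thm cug slice} to obtain the slice map property for $\K(H)$, followed by the equivalence from \cite[Theorem 5.5]{MR1138840} between that property and the OAP for \cast-algebras. Your added check that $\Cu(\G)^{\G} = \Cu(\G) \cap \L(\G)$ is a \cast-algebra is a reasonable point of care, though the paper treats it as implicit.
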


Finally, by a result of Haagerup and Kraus, a countable discrete group $\G$ has the AP if and only if $\Cl(\G)$ has the OAP (cf.\ \cite[Theorem 2.1]{MR1220905}). 
A simple inspection shows that the proof of \cite[Theorem 2.1(c) $\Rightarrow$ (a)]{MR1220905} actually demonstrates the following.

\begin{prop}\label{prop imath} Let $\G$ be a countable discrete group and let $\imath_{\G}\colon \Cl(\G) \to \L(\G)$ denote the inclusion map. If $\imath_{\G}$ has the OAP, then $\G$ has the AP. \qed
\end{prop}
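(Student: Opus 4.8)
The plan is to revisit the proof of the implication (c)$\Rightarrow$(a) in \cite[Theorem 2.1]{MR1220905} and to observe that it uses the finite-rank approximants of the identity only through their values on $\lambda(\G)$ and through their stable convergence, neither of which is affected by enlarging the codomain from $\Cl(\G)$ to $\L(\G)$.

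First I would recall the mechanism that extracts a multiplier from a completely bounded map. Given a completely bounded finite-rank map $F\colon \Cl(\G) \to \L(\G)$, define $u_{F}\colon \G \to \C$ by $u_{F}(s) \coloneqq \tau(\lambda_{s}^{*}F(\lambda_{s}))$, where $\tau$ is the canonical trace on $\L(\G)$. Writing $F = \sum_{i}\phi_{i}(\,\cdot\,)y_{i}$ with $\phi_{i} \in \Cl(\G)^{*}$ and $y_{i} \in \L(\G)$, one computes $u_{F}(s) = \sum_{i}\phi_{i}(\lambda_{s})\,\widehat{y_{i}}(s)$ with $\widehat{y_{i}}(s) = \tau(\lambda_{s}^{*}y_{i})$. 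Each $s \mapsto \phi_{i}(\lambda_{s})$ is a coefficient of a representation weakly contained in $\lambda$, hence lies in the Fourier--Stieltjes algebra, and each $\widehat{y_{i}}$ lies in $A(\G)$; since $A(\G)$ is an ideal in $B(\G)$, we get $u_{F} \in A(\G)$. Moreover I would use that the assignment $F \mapsto M_{u_{F}}$, sending $F$ to the multiplier map $\lambda_{s} \mapsto u_{F}(s)\lambda_{s}$, is the standard completely contractive projection onto multiplier maps, so that $\|u_{F}\|_{B_{2}} = \|M_{u_{F}}\|_{\cb} \le \|F\|_{\cb}$. Crucially, none of this uses that $F$ has range in $\Cl(\G)$ rather than in $\L(\G)$: only the trace coefficients $\tau(\lambda_{s}^{*}F(\lambda_{s}))$ enter.

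Now let $(F_{n})$ be a net of continuous finite-rank maps witnessing the OAP of $\imath_{\G}$, so that $F_{n} \otimes \id_{\K} \to \imath_{\G} \otimes \id_{\K}$ in the point-norm topology, and set $u_{n} \coloneqq u_{F_{n}} \in A(\G)$. The decisive step, carried out exactly as in \cite{MR1220905}, is to deduce that $M_{u_{n}} \to \id$ in the stable point-norm topology on completely bounded maps of $\Cl(\G)$. The plan here is to use that multiplier extraction $F \mapsto M_{u_{F}}$ is implemented by a fixed completely bounded operation on the $\G$-legs (slicing by the trace after the coaction) that commutes with amplification by $\K$ and leaves $\K$ a spectator; hence stable point-norm convergence of $(F_{n})$ to $\imath_{\G}$ passes to stable point-norm convergence of $(M_{u_{n}})$ to the multiplier associated with $\imath_{\G}$, whose symbol is $s \mapsto \tau(\lambda_{s}^{*}\lambda_{s}) = 1$, i.e. the identity. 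By the characterisation of the AP recalled in \cite[Theorem 1.12]{MR1220905} (stable point-norm convergence $M_{u_{n}} \to \id$ is equivalent to $u_{n} \to 1$ in the $\sigma(B_{2}(\G), Q(\G))$-topology), the group $\G$ then has the AP.

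I expect the main obstacle to be precisely this convergence step: establishing $u_{n} \to 1$ in $\sigma(B_{2}, Q)$ directly from the stable point-norm convergence of $(F_{n})$ \emph{without} a uniform bound on $\|u_{n}\|_{B_{2}}$. The absence of such a bound is exactly what separates the AP from weak amenability, and it is the reason the amplification by $\id_{\K}$ cannot be dropped. By contrast, the remaining verifications---that $u_{n} \in A(\G)$, that multiplier extraction is completely contractive, and that it intertwines amplification by $\K$---are formal, and, being phrased entirely in terms of the trace coefficients $\tau(\lambda_{s}^{*}F_{n}(\lambda_{s}))$, are insensitive to whether the approximants land in $\Cl(\G)$ or in $\L(\G)$. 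This insensitivity is the whole point: it lets the Haagerup--Kraus argument, written for $\id_{\Cl(\G)}$, be read off for $\imath_{\G}$, which is the content of the proposition.
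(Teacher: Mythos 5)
Your overall strategy is the same as the paper's: the paper proves this proposition simply by asserting that the Haagerup--Kraus proof of (c) $\Rightarrow$ (a) in \cite[Theorem 2.1]{MR1220905} survives inspection when the finite-rank approximants take values in $\L(\G)$ rather than $\Cl(\G)$, and your first half correctly isolates the points where the codomain actually enters: the symbol $u_{F}(s) = \tau(\lambda_{s}^{*}F(\lambda_{s}))$ still makes sense, still lies in $A(\G)$ (because $\hat{y}(s) = \tau(\lambda_{s}^{*}y)$ lies in $A(\G)$ for \emph{every} $y \in \L(\G)$, not just $y \in \Cl(\G)$, and $A(\G)$ is an ideal in $B(\G)$), and still satisfies $\|u_{F}\|_{B_{2}} \le \|F\|_{\cb}$. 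That much is exactly the ``simple inspection'' the paper has in mind.

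However, your account of the step you yourself call decisive rests on a claim that fails as stated. The multiplier extraction is implemented by the factorisation $M_{u_{F}} = \Ad(V^{*}) \circ (F \otimes \id_{\Cl(\G)}) \circ \delta$, where $\delta(\lambda_{s}) = \lambda_{s} \otimes \lambda_{s}$ is the Fell-absorption coaction and $V\delta_{s} = \delta_{s} \otimes \delta_{s}$; amplifying by $\K(H)$ gives
\begin{equation*}
M_{u_{F}} \otimes \id_{\K(H)} = \bigl(\Ad(V^{*}) \otimes \id_{\K(H)}\bigr) \circ \bigl(F \otimes \id_{\Cl(\G) \otimes \K(H)}\bigr) \circ \bigl(\delta \otimes \id_{\K(H)}\bigr).
\end{equation*}
So the $\K(H)$-leg is indeed a spectator, but the coaction inserts an \emph{extra copy of} $\Cl(\G)$: to control $M_{u_{n}} \otimes \id_{\K}$ one needs convergence of $F_{n} \otimes \id_{B}$ for $B = \Cl(\G) \otimes \K(H)$, and $B$ is not an operator subspace of $\K(H')$ (for infinite $\G$; producing such an embedding would require $1$-exactness of $\Cl(\G)$, which is not available here since the proposition assumes nothing about $\G$). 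Convergence of $F_{n} \otimes \id_{B}$ for such $B$ is precisely what the \emph{strong} OAP provides; the OAP gives it only for subspaces of $\K(H)$, and one cannot bridge the gap by density because an OAP net, unlike a CBAP net, carries no uniform bound on $\|F_{n}\|_{\cb}$ --- the very unboundedness you flag. Consequently ``stable point-norm convergence of $(F_{n})$ passes to stable point-norm convergence of $(M_{u_{n}})$'' is not justified by the spectator argument, and this is not how Haagerup--Kraus argue: their proof of (c) $\Rightarrow$ (a) establishes only the weak statement $u_{n} \to 1$ in $\sigma(B_{2}(\G), Q(\G))$, via their duality theory for $Q(\G)$ (representing $\sigma(B_{2},Q)$-continuous functionals through elements of $\Cl(\G) \otimes \K(H)$ paired against normal functionals on $\B(l^{2}\G \otimes H)$). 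Relatedly, the equivalence you attribute to \cite[Theorem 1.12]{MR1220905} is not an equivalence for a fixed net; only the direction you need (point-norm convergence of the multipliers implies $\sigma(B_{2},Q)$-convergence of the symbols) is true. None of this casts doubt on the proposition itself: the Haagerup--Kraus pairing argument evaluates normal functionals of $\B(l^{2}\G \otimes H)$ on $(F_{n} \otimes \id_{\K})(x)$, and such functionals are blind to whether the values lie in $\Cl(\G) \otimes \K(H)$ or $\L(\G) \otimes \K(H)$ --- which is why the paper's one-line inspection is legitimate --- but the mechanism you substitute for that argument is not one that works.
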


Now the following corollary is immediate. 
\begin{cor}\label{cor cuop ap}
The algebra $\Cl(\G)$ has the OAP if and only if $\Cu(\G)^{\G}$ has the OAP if and only if $\G$ has the AP.
\end{cor}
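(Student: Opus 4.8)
The plan is to prove a cycle of implications connecting the three conditions; write (1) for ``$\Cl(\G)$ has the OAP'', (2) for ``$\Cu(\G)^{\G}$ has the OAP'', and (3) for ``$\G$ has the AP''. The equivalence (1) $\Leftrightarrow$ (3) is precisely the theorem of Haagerup and Kraus recalled just above (cf.\ \cite[Theorem 2.1]{MR1220905}), so the task reduces to tying (2) into the picture, which I would do by establishing (2) $\Rightarrow$ (3) and (3) $\Rightarrow$ (2). Throughout I would take care to use only the results already assembled, so that no appeal to Theorem~\ref{thm main} is made.

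For (2) $\Rightarrow$ (3), I would exploit the standing inclusions $\Cl(\G) \subseteq \Cu(\G)^{\G} \subseteq \L(\G)$ together with Proposition~\ref{prop imath}. Assuming $\Cu(\G)^{\G}$ has the OAP, fix a net of continuous finite-rank maps $F_{n}\colon \Cu(\G)^{\G} \to \Cu(\G)^{\G}$ witnessing it. Pre-composing with the inclusion $\Cl(\G) \hookrightarrow \Cu(\G)^{\G}$ and post-composing with the inclusion $\Cu(\G)^{\G} \hookrightarrow \L(\G)$ produces continuous finite-rank maps $\Cl(\G) \to \L(\G)$. Since the minimal tensor product is functorial and the induced inclusion $\Cu(\G)^{\G} \otimes \K(H) \hookrightarrow \L(\G) \otimes \K(H)$ is isometric, the stable point-norm convergence of $F_{n} \otimes \id_{\K(H)}$ to the identity on $\Cu(\G)^{\G} \otimes \K(H)$ restricts to show that these composite maps approximate $\imath_{\G} \otimes \id_{\K(H)}$ on $\Cl(\G) \otimes \K(H)$. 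Hence $\imath_{\G}$ has the OAP, and Proposition~\ref{prop imath} delivers (3).

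For (3) $\Rightarrow$ (2), I would route through the slice map machinery. If $\G$ has the AP, then $\G$ is exact (cf.\ \cite[Theorem 2.1]{MR1220905}), so Zacharias's Theorem~\ref{thm Zac} applies and gives $\Cl(\G) \otimes S = (\Cu(\G) \otimes S)^{\G}$ for every operator space $S \subseteq \K(H)$. Squeezing this equality between the standing inclusions $\Cl(\G) \otimes S \subseteq \Cu(\G)^{\G} \otimes S \subseteq (\Cu(\G) \otimes S)^{\G}$ forces $\Cu(\G)^{\G} \otimes S = (\Cu(\G) \otimes S)^{\G}$ for all such $S$. Corollary~\ref{cor cug oap} then yields that $\Cu(\G)^{\G}$ has the OAP, which is (2). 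Combining the two directions with the Haagerup--Kraus equivalence closes the cycle and proves all three conditions equivalent.

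The only delicate point I anticipate is the verification in (2) $\Rightarrow$ (3) that restricting the approximating net genuinely witnesses the OAP of $\imath_{\G}$: preservation of finite rank and complete boundedness under composition with the inclusions is immediate, but one must confirm that the point-norm limit survives passage from $\Cu(\G)^{\G} \otimes \K(H)$ to $\L(\G) \otimes \K(H)$, which rests on the isometric embedding of minimal tensor products induced by an inclusion of \cast-algebras. I would also double-check the absence of circularity: the (2) $\Rightarrow$ (3) implication is exactly what feeds back into the proof of Theorem~\ref{thm main}, yet the argument above invokes only Proposition~\ref{prop imath}, Corollary~\ref{cor cug oap}, the cited results of Haagerup--Kraus and Zacharias, and the elementary inclusions, so the reasoning is self-contained.
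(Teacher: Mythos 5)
Your proof is correct and its overall skeleton matches the paper's: both treat the two ``OAP $\Rightarrow$ AP'' directions by factoring finite-rank approximants through the inclusions $\Cl(\G) \subseteq \Cu(\G)^{\G} \subseteq \L(\G)$ to conclude that $\imath_{\G}$ has the OAP and then invoking Proposition~\ref{prop imath}, and both use Haagerup--Kraus \cite[Theorem 2.1]{MR1220905} to get the OAP of $\Cl(\G)$ from the AP. (The paper compresses the factoring step into the single clause ``if $\Cl(\G)$ or $\Cu(\G)^{\G}$ has the OAP, then so does $\imath_{\G}$''; your verification of finite rank, complete boundedness, and survival of stable point-norm convergence under the isometric inclusion of minimal tensor products is exactly the content left implicit there.) Where you genuinely diverge is the leg AP $\Rightarrow$ OAP of $\Cu(\G)^{\G}$: the paper argues that the AP implies the ITAP via Theorem~\ref{thm Zac}, so that $\Cu(\G)^{\G} = \Cl(\G)$ outright and the OAP transfers across this identity; you instead keep the two algebras distinct, squeeze the chain $\Cl(\G) \otimes S \subseteq \Cu(\G)^{\G} \otimes S \subseteq (\Cu(\G) \otimes S)^{\G}$ against Zacharias's equality to obtain condition (\ref{it Slice}) of Theorem~\ref{thm main} for all $S \subseteq \K(H)$, and then apply Corollary~\ref{cor cug oap}. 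Your route is valid and non-circular (Corollary~\ref{cor cug oap} rests only on the slice-map machinery of Section~\ref{sec SMP}, not on the present corollary), but it is heavier: it re-routes through nuclearity of $\Cu(\G)$ and Kraus's slice-map theorem, whereas the paper's use of the ITAP collapse is a one-line transfer. One small aesthetic remark: from your own squeeze you could equally have extracted $\Cl(\G) \otimes S = \Cu(\G)^{\G} \otimes S$ and specialised to a one-dimensional $S$ to recover $\Cl(\G) = \Cu(\G)^{\G}$, which is precisely the paper's shortcut.
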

\begin{proof} If $\Cl(\G)$ or $\Cu(\G)^{\G}$ has the OAP, then so does $\imath_{\G}$ and thus $\G$ has the AP by Proposition~\ref{prop imath}. Conversely, if $\G$ has the AP, then $\Cl(\G)$ has the OAP by \cite[Theorem 2.1]{MR1220905} and $\Cu(\G)^{\G} = \Cl(\G)$ by Theorem~\ref{thm Zac}.
\end{proof}


Now we are ready to prove Theorem~\ref{thm main}.
\begin{proof}[Proof of Theorem~\ref{thm main}] In view of Theorem~\ref{thm Zac}, it suffices to prove (\ref{it Slice}) $\Rightarrow$ (\ref{it AP}). However, this is clear from Corollary~\ref{cor cug oap} and Corollary~\ref{cor cuop ap}.
\end{proof}


\section{Other Approximation Properties}
We end with a theorem which is in agreement with the conjecture that all countable discrete groups have the ITAP. See \cite{MR2391387} for the terminologies used. Compare Corollary~\ref{cor cuop ap}.

\begin{thm}\label{thm app} Let $\G$ be a countable discrete group. The following statements hold.
\begin{enumerate}[(i)]
\item\label{nuclear} The algebra $\Cl(\G)$ is nuclear if and only if $\Cu(\G)^{\G}$ is nuclear if and only if $\G$ is amenable. 
\item\label{CBAP} The algebra $\Cl(\G)$ has the CBAP if and only if $\Cu(\G)^{\G}$ has the CBAP if and only if $\G$ is weakly amenable.
\item\label{SOAP} The algebra $\Cl(\G)$ has the SOAP if and only if $\Cu(\G)^{\G}$ has the SOAP if and only if $\G$ has the AP.
\item\label{exact} The algebra $\Cl(\G)$ is exact if and only if $\Cu(\G)^{\G}$ is exact if and only if $\G$ is exact.
\end{enumerate}
\end{thm}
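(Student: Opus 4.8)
The plan is to treat statements (\ref{nuclear}), (\ref{CBAP}) and (\ref{SOAP}) by a single uniform mechanism and to handle (\ref{exact}) separately using permanence properties. For the first three, I would exploit the fact that each of nuclearity, the CBAP and the SOAP implies the OAP: a completely positive approximate factorisation, a net of finite-rank maps with uniformly bounded completely bounded norm, and a strongly stable finite-rank approximation each witness stable point-norm convergence of the identity. Thus, if $\Cu(\G)^{\G}$ enjoys any one of these three properties, it has the OAP, and Corollary~\ref{cor cuop ap} forces $\G$ to have the AP. Since groups with the AP have the ITAP (the AP also implies exactness, so Theorem~\ref{thm Zac} with $S = \C$ applies), we obtain $\Cu(\G)^{\G} = \Cl(\G)$, which collapses the problem onto the reduced group \cast-algebra.

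With this collapse in hand, each equivalence reduces to the classical characterisations recorded in \cite{MR2391387}: $\Cl(\G)$ is nuclear iff $\G$ is amenable; $\Cl(\G)$ has the CBAP iff $\G$ is weakly amenable; and $\Cl(\G)$ has the SOAP iff $\G$ has the AP. Concretely, for (\ref{nuclear}) I would argue that if $\Cu(\G)^{\G}$ is nuclear then it has the OAP, hence $\G$ has the AP, hence $\Cu(\G)^{\G} = \Cl(\G)$ is nuclear, whence $\G$ is amenable; conversely, if $\G$ is amenable then it has the AP, so $\Cu(\G)^{\G} = \Cl(\G)$, and the latter is nuclear by Lance's theorem. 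The arguments for (\ref{CBAP}) and (\ref{SOAP}) are verbatim, replacing nuclearity by the CBAP or the SOAP and using that amenable and weakly amenable groups have the AP (Example~\ref{ex ap}) together with the corresponding classical characterisation.

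Statement (\ref{exact}) must be handled differently, since an exact group need not have the AP and the ITAP is therefore unavailable. Here I would instead use that exactness passes to \cast-subalgebras, together with the inclusions $\Cl(\G) \subseteq \Cu(\G)^{\G} \subseteq \Cu(\G)$. If $\Cu(\G)^{\G}$ is exact then its subalgebra $\Cl(\G)$ is exact, so $\G$ is exact by definition. Conversely, if $\G$ is exact then $\Cu(\G)$ is nuclear by \cite[Theorem 3]{MR1763912}, in particular exact, and hence so is its subalgebra $\Cu(\G)^{\G}$. Combined with the definitional equivalence between exactness of $\G$ and of $\Cl(\G)$, this closes the cycle.

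The main obstacle is conceptual rather than computational: recognising that nuclearity, the CBAP and the SOAP all funnel through the OAP, and hence through the AP, so that Zacharias's theorem collapses $\Cu(\G)^{\G}$ onto $\Cl(\G)$ and reduces everything to known facts about the reduced group \cast-algebra. The one genuine subtlety is that this funnel is unavailable for exactness — exact groups lacking the AP (such as $SL_{3}(\Z)$) break the collapse — so exactness must be obtained instead from the stability of the class of exact \cast-algebras under passage to subalgebras, using the nuclearity of the ambient uniform Roe algebra.
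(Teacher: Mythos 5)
Your proof is correct, and for parts (\ref{nuclear})--(\ref{SOAP}) it takes a genuinely different route from the paper's, while part (\ref{exact}) coincides with the paper's argument exactly. The paper proves a dedicated lemma (Lemma~\ref{lem i}) asserting that the \emph{inclusion map} $\imath_{\G}\colon \Cl(\G) \to \L(\G)$ is nuclear (resp.\ has the CBAP, the SOAP) if and only if $\G$ is amenable (resp.\ weakly amenable, has the AP); this is obtained by inspecting the proofs of the classical theorems in \cite{MR2391387} and \cite{MR1220905} to check that they only use the relevant property of the inclusion rather than of the whole algebra. With that lemma, the forward implications are immediate, since $\Cl(\G) \subseteq \Cu(\G)^{\G} \subseteq \L(\G)$ means that if either algebra has the property then so does $\imath_{\G}$; the backward implications use the ITAP collapse exactly as you do. Your argument replaces Lemma~\ref{lem i}(i),(ii) by the standard hierarchy (nuclear $\Rightarrow$ CBAP $\Rightarrow$ SOAP $\Rightarrow$ OAP) together with the already-established Corollary~\ref{cor cuop ap}: any of the three properties of $\Cu(\G)^{\G}$ funnels through the OAP to the AP, Theorem~\ref{thm Zac} then collapses $\Cu(\G)^{\G}$ onto $\Cl(\G)$, and the classical \emph{algebra-level} characterisations (Lance for nuclearity, Haagerup for the CBAP, Haagerup--Kraus for the SOAP) finish. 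What your route buys is economy: you never need the inclusion-map strengthenings of Lance's and Haagerup's theorems, only their standard statements plus Proposition~\ref{prop imath}, which enters through Corollary~\ref{cor cuop ap}. What the paper's route buys is directness (its forward implications do not detour through the AP and the collapse) and a lemma of independent interest. Both arguments treat part (\ref{exact}) identically, and your remark about why this case \emph{must} be separate --- exact groups such as $SL_{3}(\Z)$ may fail the AP, so the funnel is unavailable and one instead uses nuclearity of $\Cu(\G)$ and permanence of exactness under passage to subalgebras --- is precisely the point.
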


The proof is based on the following analogue of Proposition~\ref{prop imath}.
\begin{lem}\label{lem i} Let $\G$ be a countable discrete group.
\begin{enumerate}[(i)]
\item\label{i nuclear} The inclusion $\imath_{\G} \colon \Cl(\G) \to \L(\G)$ is nuclear if and only if $\G$ is amenable.
\item\label{i CBAP} The inclusion $\imath_{\G} \colon \Cl(\G) \to \L(\G)$ has the CBAP if and only if $\G$ is weakly amenable.
\item\label{i SOAP} The inclusion $\imath_{\G} \colon \Cl(\G) \to \L(\G)$ has the SOAP if and only if $\G$ has the AP.
\end{enumerate}
\end{lem}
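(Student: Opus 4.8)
The plan is to deduce all three equivalences from a single averaging device. Writing $\tau$ for the canonical trace on $\L(\G)$, I associate to each completely bounded map $F\colon\Cl(\G)\to\L(\G)$ the function
\[
D(F)(s)\coloneqq\tau\bigl(\lambda_s^*\,F(\lambda_s)\bigr),\qquad s\in\G.
\]
This is exactly the diagonalisation underlying the Haagerup--Kraus proof of \cite[Theorem 2.1]{MR1220905} invoked in Proposition~\ref{prop imath}, and I would first record the four properties on which everything rests. First, the Herz--Schur estimate $\|D(F)\|_{M_0A(\G)}\le\|F\|_{\cb}$, which is the technical backbone and the point where complete boundedness is used. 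Second, if $F$ has finite rank then $D(F)\in\ell^2(\G)$ by a direct estimate, since $D(F)(s)=\langle F(\lambda_s)\delta_e,\delta_s\rangle$; because $\ell^2(\G)\subseteq A(\G)$ via $u(s)=\langle\lambda_s\check u,\delta_e\rangle$ with $\|u\|_{A}\le\|u\|_2$, this gives $D(F)\in A(\G)$ for free, with no normality needed. Third, if $F$ is completely positive then $D(F)$ is a positive-definite function on $\G$. Fourth, if $F_\alpha\to\imath_\G$ in the point-norm topology then $D(F_\alpha)(s)\to\tau(1)=1$ for each $s$, i.e.\ $D(F_\alpha)\to1$ pointwise.

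Given these, the directions (ii) and (iii) are quick. If $\imath_\G$ has the CBAP, pick finite-rank $F_\alpha\to\imath_\G$ with $\sup_\alpha\|F_\alpha\|_{\cb}=C<\infty$; then $u_\alpha\coloneqq D(F_\alpha)\in A(\G)$ satisfy $\sup_\alpha\|u_\alpha\|_{M_0A(\G)}\le C$ and $u_\alpha\to1$ pointwise, which is precisely weak amenability of $\G$. For (iii), the SOAP is formally stronger than the OAP, as its defining stable convergence is tested against all of $\B(H)\supseteq\K(H)$; hence the SOAP of $\imath_\G$ implies its OAP, and Proposition~\ref{prop imath} delivers the AP.

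The nuclear case (i) is where the extra idea is needed, and I expect its last step to be the main obstacle. Nuclearity of $\imath_\G$ provides finite-rank maps $F_\alpha=\psi_\alpha\circ\phi_\alpha$ factoring completely positively through matrix algebras, so each $F_\alpha$ is completely positive and $u_\alpha=D(F_\alpha)$ is a positive-definite element of $A(\G)$ with $u_\alpha\to1$ pointwise. The difficulty is that positive-definite functions in $A(\G)$ tending to $1$ a priori only witness the Haagerup property rather than amenability, and the bound $\|u_\alpha\|_{M_0A(\G)}\le\|F_\alpha\|_{\cb}$ is too weak to do better. To close the gap I would use that a positive-definite $u\in A(\G)$ satisfies $\|u\|_{A}=\|u\|_{B(\G)}=u(e)$, together with $u_\alpha(e)=\tau(F_\alpha(1))\to1$. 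Thus $(u_\alpha)$ is bounded in the Fourier-algebra norm and converges pointwise to $1$, hence is a bounded approximate identity for $A(\G)$, and Leptin's theorem yields amenability of $\G$.

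For the converses I would run the construction backwards through multipliers. Amenability, weak amenability and the AP each furnish a net of finitely supported functions $u_\alpha$ on $\G$ with $u_\alpha\to1$ (pointwise, respectively in the $\sigma(B_2(\G),Q(\G))$-topology), positive-definite with $u_\alpha(e)\le1$ in the amenable case and with $\sup_\alpha\|u_\alpha\|_{M_0A(\G)}<\infty$ in the weakly amenable case. The associated multipliers $M_{u_\alpha}$ are normal and, being finitely supported, of finite rank; their restrictions $M_{u_\alpha}|_{\Cl(\G)}\colon\Cl(\G)\to\L(\G)$ converge in point-norm to $\imath_\G$ and witness the CBAP (uniform $\cb$-bound) and, via the Haagerup--Kraus translation of $\sigma(B_2(\G),Q(\G))$-convergence into stable point-norm convergence against $\B(H)$, the SOAP. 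For nuclearity one uses in addition that a finite-rank completely positive map of \cast-algebras factors completely positively through a matrix algebra, so that the completely positive maps $M_{u_\alpha}|_{\Cl(\G)}$ exhibit $\imath_\G$ as nuclear.
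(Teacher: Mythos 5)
Your diagonalisation $D(F)(s)=\tau(\lambda_s^{*}F(\lambda_s))$ is precisely the device underlying the proofs that the paper itself merely cites (Brown--Ozawa, Theorems 2.6.8 and 12.3.10, and Haagerup--Kraus), so in spirit you are reconstructing the intended argument rather than taking a different route, and most of it is sound. The four properties of $D$ are correct --- in particular the finite-rank $\Rightarrow$ $\ell^{2}$ step, $D(F)(s)=\langle F(\lambda_s)\delta_e,\delta_s\rangle$, is exactly where the target $\L(\G)$ and the vector $\delta_e$ are used --- the forward implications in (ii) and (iii) are right, and your forward implication in (i) (positive definiteness plus $\|u_\alpha\|_{A}=u_\alpha(e)\to 1$, hence a bounded approximate identity for $A(\G)$, hence amenability by Leptin) is a correct variant of the standard route, which instead realises the $u_\alpha$ as coefficients $\langle\lambda_{\cdot}\,\xi_\alpha,\xi_\alpha\rangle$ of the regular representation and produces almost invariant vectors. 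The converses of (ii) and (iii) are also fine, granted the Haagerup--Kraus convexity argument you cite for upgrading $\sigma(B_2(\G),Q(\G))$-convergence to stable point-norm convergence.

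The genuine gap is in the converse of (i). You invoke the claim that every finite-rank completely positive map of \cast-algebras factors completely positively through a matrix algebra. This is not a standard fact, and it fails in general: already for commutative $A$ and $B$ (where c.p.\ $=$ positive), factoring a finite-rank positive map through $M_n(\C)$ by completely positive maps amounts exactly to a size-$n$ positive semidefinite factorisation of its kernel, and there exist continuous nonnegative kernels of rank $3$ admitting no such factorisation of any finite size --- for instance the pairing $(v,h)\mapsto h(v)$ between points of the plane and nonnegative affine functions, whose finite submatrices include the slack matrices of all convex polygons; a uniform bound $n$ on their psd rank would represent every polygon as a projection of a spectrahedron of fixed size, forcing a uniform bound on the algebraic degree of its boundary, which is absurd. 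So ``finite rank $+$ c.p.'' does not buy matrix factorisations, and your nuclearity converse as written does not close. It is easily repaired in either of two ways: (a) for amenable $\G$ do not take arbitrary finitely supported positive definite $u_\alpha$, but the F\o lner functions $u_F(g)=|gF\cap F|/|F|$, which admit the explicit completely positive factorisation $x\mapsto P_F x P_F$ from $\Cl(\G)$ to $M_F(\C)$ followed by $[a_{s,t}]\mapsto \frac{1}{|F|}\sum_{s,t\in F}a_{s,t}\lambda_{st^{-1}}$ (this is exactly what Brown--Ozawa do); or (b) note that all three converses are immediate from the classical facts about $\Cl(\G)$ (amenable $\Rightarrow$ nuclear, weakly amenable $\Rightarrow$ CBAP, AP $\Rightarrow$ SOAP): post-composing an approximating net for $\id_{\Cl(\G)}$ with the inclusion $\imath_\G$ yields one for $\imath_\G$, so no new factorisation lemma is needed at all.
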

\begin{proof}
\begin{enumerate} 
\item[(\ref{i nuclear})] Follows from the proof of \cite[Theorem 2.6.8(9) $\Leftrightarrow$ (1)]{MR2391387}.
\item[(\ref{i CBAP})] Follows from the proof of \cite[Theorem 12.3.10]{MR2391387}.
\item[(\ref{i SOAP})] Follows from Proposition~\ref{prop imath} and \cite[Theorem 2.1]{MR1220905}.
\end{enumerate}
\end{proof}

\begin{proof}[Proof of Theorem~\ref{thm app}] As remarked in Example~\ref{ex ap}, amenable and weakly amenable groups have the AP, thus have the ITAP by Theorem~\ref{thm Zac}. Combining with Lemma~\ref{lem i}, we get the statements (\ref{nuclear}), (\ref{CBAP}) and (\ref{SOAP}).

Finally, the statement (\ref{exact}) follows from \cite[Theorem 3]{MR1763912}, since exactness passes to subalgebras. 
\end{proof}

\bibliographystyle{amsalpha}
\bibliography{../BibTeX/biblio}

\end{document}